\documentclass[12pt]{article}
\usepackage{amsmath}
\usepackage[affil-it]{authblk}
\usepackage{amsfonts}
\usepackage{amsthm}
\usepackage{amssymb}
\usepackage{makecell}
\usepackage{appendix}
\usepackage{algorithmic}
\usepackage{caption}
\usepackage{graphicx}
\usepackage{algorithm2e}
\usepackage{tabu}
\usepackage{color}
\usepackage{soul}
\usepackage{url}
\definecolor{darkgreen}{rgb}{0.0, 0.63, 0.0}
\theoremstyle{definition}

\usepackage{mathtools}

\theoremstyle{lemma}
\newtheorem{theorem}{Theorem}

\newtheorem{lemma}[theorem]{Lemma}

\newcommand{\trace}{\text{Tr}}

\usepackage[margin=0.9in]{geometry}

\title{An Infinite Family of Real Quadratic Fields with Three Classes of Perfect Unary Forms}
\author{Christian Porter \footnote{Department of Electrical and Electronic Engineering, Imperial College London, United Kingdom. Corresponding author, c.porter17@imperial.ac.uk}}
\date{March 2024}

\begin{document}

\maketitle
\begin{abstract}
    In this paper, we revisit the theory of perfect unary forms over real quadratic fields. Specifically, we deduce an infinite family of real quadratic fields $\mathbb{Q}(\sqrt{d})$ when $d=2$ or $3$ mod $4$, such that there are three classes of perfect unary forms up to homothety and equivalence. This work, along with the work in \cite{unitred}, seems to suggest that the number of classes of perfect unary forms is related to the fundamental unit of $K$.
\end{abstract}
\section{Introduction}
Let $K$ be a totally real number field over $\mathbb{Q}$ of degree $n$. A quadratic form $f: K^m \to K$, defined by
\begin{align*}
    f(x_1,\dots,x_m)=\sum_{i,j=1}^m f_{i,j}x_ix_j
\end{align*}
is said to be positive definite if the matrix $\{\sigma(f_{i,j})\}_{i,j=1}^m$ is positive-definite for all embeddings $\sigma: K \to \mathbb{R}$. If $f$ is defined in a single variable, i.e. $m=1$, we call $f(x)=ax^2$ a unary form, which is said to be a positive unary form if $\sigma(a)>0$ for all embeddings $\sigma$, and we say that $a$ is a totally positive element of $K$. We will denote by $K_{>>0}$ the space of totally positive elements of $K$. From now on, when referring to unary forms we will always assume that the unary form is positive.
For a unary form $ax^2$ of $K$, we will denote by
\begin{align*}
    &\mu(a) \triangleq \min_{x \in \mathcal{O}_K \setminus \{0\}} \trace_{K/\mathbb{Q}}(ax^2),
    \\& \mathcal{M}(a)=\{x \in \mathcal{O}_K: \trace_{K/\mathbb{Q}}(ax^2)=\mu(a)\},
\end{align*}
where $\mathcal{O}_K$ is the ring of integers of $K$ (i.e. the minimal nonzero value and minimal vectors of the corresponding rational quadratic form generated by the algebraic trace of the unary form, respectively). A unary form $ax^2$ is said to be a perfect unary form if it is uniquely determined by $\mu(a), \mathcal{M}(a)$. Note that for any $\lambda \in \mathbb{Q}^+$, $\mu(\lambda a)=\lambda \mu(a), \mathcal{M}(\lambda a)=\mathcal{M}(a)$ and so perfect unary forms can be considered by their homothety classes. 

Let $\mathcal{O}_K^\times$ denote the unit group of $K$. We say that two unary forms $ax^2, a^\prime x^2$ are equivalent if $a^\prime=au^2$ for some $u \in \mathcal{O}_K^\times$. We denote by $n_K$ the number of $GL_1(\mathcal{O}_K)$-inequivalent homothety classes of perfect forms of a totally real number field $K$.

In \cite{Y}, if we let $K=\mathbb{Q}(\sqrt{d})$ for $d$ some square-free positive integer, the author computed $n_K$ for $d<200000$. Moreover, in the same paper it was proven that there were infinitely many $K$ such that $n_K=1$. In \cite{unitred}, it was proven that $n_K=1$ if and only if $d$ was of one of four types:

\begin{itemize}
    \item $T_1$: $d=n^2+1$, $n \in \mathbb{N}$, $n$ odd,
    \item $T_2$: $d=n^2-1$, $n \in \mathbb{N}$, $n$ even,
    \item $T_3$: $d=n^2+4$, $n \in \mathbb{N}$, $n$ odd,
    \item $T_4$: $d=n^2-4$, $n \in \mathbb{N}$, $n>3$ odd.
\end{itemize}
In the same paper, it was shown that there exists an infinite family of real quadratic fields with $n_K=2$ (namely, when $K$ is of Richaud-Degert type \cite{R-D}). In this paper, we deduce an infinite family of real quadratic fields with $n_K=3$.

\begin{theorem}\label{maintheorem}
    Let $K=\mathbb{Q}(\sqrt{d})$ for some positive, square-free integer $d$, with $d \equiv 2$ or $3 \mod 4$. Write $d=n^2+r$ for integers $n,r$ satisfying $-n < r \neq \pm 1 \leq n$. Let $\alpha+\beta \sqrt{d}$, and suppose $\beta=m(m+2)$ for some odd integer $m \geq 3$. Then:
    \begin{itemize}
        \item If $\alpha \equiv \pm 1 \mod \beta^2$, $n_K=2$. All perfect forms are homothetically equivalent to, or equivalent via some $GL_1(\mathcal{O}_K)$ transform to, either $a_1x^2$ or $a_2x^2$, where
        \begin{align*}
            &a_1=\frac{1}{2}+\frac{2n^2+r-1}{4(n^2+r)n}\sqrt{d},
            \\&a_1=\frac{1}{2}-\frac{2n^2+r-1}{4(n^2+r)n}\sqrt{d}.
        \end{align*}
        \item If $\alpha \equiv \pm \left(\frac{m-1}{2}(m+2)^2+1\right) \mod \beta^2$, $n_K=3$. All perfect forms are homothetically equivalent to, or equivalent via some $GL_1(\mathcal{O}_K)$ transform to, either $a_1x^2, a_2x^2$ or $a_3x^2$, where $a_1,a_2$ are defined as before, and
        \begin{align*}
            a_3=\alpha+\beta \sqrt{d}.
        \end{align*}
    \end{itemize}
\end{theorem}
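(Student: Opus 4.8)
The plan is to run the Voronoi--Koecher theory of perfect forms on the self-dual cone $K_{>>0}$, as in \cite{Y,unitred}, and to push its (normally field-by-field, finite) output through uniformly in $n,r,m$. Since $d\equiv2,3\pmod4$ we have $\mathcal{O}_K=\mathbb{Z}[\sqrt d]$, and for $a=p+q\sqrt d\in K_{>>0}$ the trace form is $Q_a(x,y)=2p\,x^2+4dq\,xy+2pd\,y^2=\sigma_1(a)(x+y\sqrt d)^2+\sigma_2(a)(x-y\sqrt d)^2$ for the two real embeddings $\sigma_1,\sigma_2$. Homothety fixes the ratio $t(a)=\sigma_1(a)/\sigma_2(a)$, a unit $u$ sends $t(a)$ to $t(u)^2 t(a)$, and $\mathcal{M}(a)$ --- hence whether $ax^2$ is perfect --- is locally constant as $t(a)$ moves between consecutive ``walls'' (ratios at which two lattice vectors tie for $\mu$). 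So the set of homothety classes of totally positive $a$ modulo $(\mathcal{O}_K^\times)^2$ is a circle in the coordinate $t$, carrying finitely many walls with exactly one perfect form in each arc between consecutive walls, and $n_K$ is the number of arcs. Recall also that $ax^2$ is perfect iff $\mathcal{M}(a)$ contains $\theta_1,\theta_2\in\mathcal{O}_K$ with $\theta_1^2,\theta_2^2$ $\mathbb{Q}$-linearly independent, i.e.\ $\theta_1/\theta_2\notin\mathbb{Q}\cup\sqrt d\,\mathbb{Q}$, and that $\mu(a)^2/\bigl(4d\,\norm_{K/\mathbb{Q}}(a)\bigr)$ is invariant under homothety and $GL_1(\mathcal{O}_K)$, so it separates classes.

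First I would show $a_1x^2$ and $a_2x^2=\overline{a_1x^2}$ are perfect for every $d$ in the family; the nontrivial automorphism of $K$ preserves $\mu,\mathcal{M}$ and perfection, so it suffices to treat $a_1$. A direct computation gives $Q_{a_1}(x,y)=x^2+\tfrac{2n^2+r-1}{n}\,xy+d\,y^2$ and the key identities $Q_{a_1}(1,0)=Q_{a_1}(n,-1)=1$. Since $d>1$ is squarefree and $r\neq\pm1$ we have $n\ge2$, and then a short estimate on the determinant of $Q_{a_1}$ gives $Q_{a_1}(x,y)>1$ once $|y|\ge2$, while for $|y|=1$ the parabola $x\mapsto Q_{a_1}(x,\pm1)$ has its integer minimum at $x=\mp n$; hence $\mu(a_1)=1$ and $\mathcal{M}(a_1)\ni\pm(1,0),\pm(n,-1)$. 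As $\theta_1=1$ and $\theta_2=n-\sqrt d$ satisfy $\theta_2^2=2n^2+r-2n\sqrt d\notin\mathbb{Q}$, these have independent squares, so $a_1x^2$ (hence $a_2x^2$) is perfect. Moreover $d\equiv2,3\pmod4$ together with $r\neq\pm1$ keeps $d$ out of the types $T_1$--$T_4$, so $n_K\ge2$ by \cite{unitred}; directly, $a_1\sim a_2$ would force $d/(4d-(r+1)^2)$ to be a square in $\mathbb{Q}^\times$, hence (as $d$ is squarefree) $d\mid(r+1)^2$, impossible for $n\ge2$ and $r\neq\pm1$. Thus the classes of $a_1$ and $a_2$ furnish two of the arcs.

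Next I would analyse the unit form. The identity $\bigl(\tfrac{m-1}{2}(m+2)^2+1\bigr)^2-1=\beta^2\cdot\tfrac{(m-1)(m+3)}{4}$ (with $\beta=m(m+2)$), together with the trivial one for $\pm1$, gives $\alpha^2-d\beta^2=1$ in both cases; taking $\alpha>0$, $\varepsilon=\alpha+\beta\sqrt d$ is a totally positive unit, and since $\beta=m(m+2)$ is odd, $\varepsilon$ is an odd power of $\varepsilon_K$ (so in particular $\norm_{K/\mathbb{Q}}(\varepsilon_K)=+1$), whence $\varepsilon x^2$ is $GL_1(\mathcal{O}_K)$-equivalent to $\varepsilon_K x^2$ and ``$a_3=\alpha+\beta\sqrt d$'' is a legitimate representative of that class whenever it is perfect. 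Now $Q_\varepsilon(x,y)=2\alpha x^2+4d\beta xy+2\alpha d y^2$ has minimum over real $x$ for fixed $y$ equal to $(2d/\alpha)y^2$, attained near $x=-(d\beta/\alpha)y\approx-\sqrt d\,y$, so the short vectors of $\varepsilon$ are read off from the continued-fraction convergents of $\sqrt d$, and $\beta=m(m+2)$ being the relevant $y$-value forces that expansion into a specific $m$-controlled palindromic period. The crux is: if $\alpha\equiv\pm1\pmod{\beta^2}$ a single convergent pair strictly realizes $\mu(\varepsilon)$, so $\varepsilon x^2$ has only one minimal vector pair, lies on a wall (no arc of its own), and the $GL_1(\mathcal{O}_K)$-translates of the arcs of $a_1,a_2$ already tile $\mathbb{R}^{>0}$: $n_K=2$. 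If instead $\alpha\equiv\pm\bigl(\tfrac{m-1}{2}(m+2)^2+1\bigr)\pmod{\beta^2}$, two distinct convergent pairs tie for $\mu(\varepsilon)$ with $\mathbb{Q}$-independent squares, so $a_3x^2=\varepsilon x^2$ is perfect; the invariant $\mu^2/(4d\,\norm)$ separates it from $a_1,a_2$, and now the arcs of $a_1,a_2,a_3$ and their translates tile $\mathbb{R}^{>0}$ with nothing left over: $n_K=3$. Finally one exhibits infinitely many admissible $d$: for each odd $m\ge3$ and $k\ge1$ put $\alpha=k\beta^2+\bigl(\tfrac{m-1}{2}(m+2)^2+1\bigr)$ and $d=(\alpha^2-1)/\beta^2$; among these the squarefree $d\equiv2,3\pmod4$ still form an infinite set, $\varepsilon$ is then the fundamental unit, and one checks $-n<r\neq\pm1\le n$.

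The main obstacle is precisely this uniformity: none of the minimal-vector or arc-covering statements can be settled by the finite search one runs for a single field, so each must become an inequality or identity in $n,r,m,k$. The delicate core is converting the residue of $\alpha$ modulo $\beta^2$ into the arithmetic of the convergents of $\sqrt d$ --- showing that the residue $\tfrac{m-1}{2}(m+2)^2+1$ makes a second convergent pair attain $\mu(\varepsilon)$ while the residue $\pm1$ does not --- which is where the precise shape $\beta=m(m+2)$ with $m$ odd is actually used. A secondary point needing care is confirming that $\varepsilon=\alpha+\beta\sqrt d$ is the fundamental unit, not a proper power, throughout the admissible family (for $d=3$, $m=3$ it equals $\varepsilon_K^3$, but that $d$ is excluded by $r\neq\pm1$), since $n_K$ is sensitive to the regulator. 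I expect this continued-fraction bookkeeping, not any single conceptual step, to absorb most of the effort.
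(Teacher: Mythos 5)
Your outline matches the paper's strategy --- establish perfection of $a_1x^2,a_2x^2$ by exhibiting the two minimal vectors $\pm1,\pm(n\mp\sqrt d)$, analyse the trace form of the unit $\varepsilon=\alpha+\beta\sqrt d$ to decide whether it contributes a third class, and close the argument with the connectedness of the Voronoi graph of neighbouring forms. The first step you carry out correctly and essentially identically to the paper's Lemma 2.1 (the identities $Q_{a_1}(1,0)=Q_{a_1}(n,-1)=1$ and the discriminant bound for $|y|\ge2$ are exactly the paper's computation), and your observation that the congruence $\alpha\equiv\pm\bigl(\tfrac{m-1}{2}(m+2)^2+1\bigr)\pmod{\beta^2}$ already forces $\alpha^2-d\beta^2=+1$ is a clean shortcut past the paper's argument via primes $\equiv 3\bmod 4$.

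However, the two load-bearing steps are not supplied. (i) You assert that under the stated congruence ``two distinct convergent pairs tie for $\mu(\varepsilon)$ with $\mathbb{Q}$-independent squares'' and yourself label the verification as the ``delicate core'' to be done later; but this is precisely the content of the theorem. The paper does it by writing $\alpha=k\beta^2\pm\bigl(\tfrac{m-1}{2}(m+2)^2+1\bigr)$, so that $l=km(m+2)\pm\tfrac{m+1}{2}=\lfloor d\beta/\alpha\rceil$ and $d=l^2\mp2k(m+1)-1$, and running two explicit steps of Gauss reduction to land on a reduced form whose two outer coefficients are both equal to $\epsilon=k\bigl((m+1)^2+1\bigr)\pm\tfrac{m+1}{2}$, with middle coefficient controlled by the inequality $|\gamma+l'|\le 1/2$; this is what produces the two independent minimal vectors $\pm(l-\sqrt d)$ and $\pm(\alpha-\beta\sqrt d)(l+\sqrt d)$ and, via the equality cases, pins down $\mathcal M(a_3)$ exactly. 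Without this computation the claim that $a_3x^2$ is perfect (and the companion claim that it is not perfect when $\alpha\equiv\pm1$) is unsupported. (ii) Your final step --- ``the arcs of $a_1,a_2,a_3$ and their translates tile $\mathbb{R}^{>0}$ with nothing left over'' --- is exactly the statement $n_K=3$ and needs the explicit determination of $\mathcal M(a_3)$ from (i): the paper shows that the only neighbour of $a_3x^2$ other than $a_1x^2$ shares the minimal vector $(\alpha-\beta\sqrt d)(n+\sqrt d)$, hence after multiplying by $(\alpha-\beta\sqrt d)^2$ is forced to coincide with $a_2x^2$ up to homothety, closing the cycle. A smaller point: your invariant $\mu(a)^2/\bigl(4d\,\norm_{K/\mathbb{Q}}(a)\bigr)$ is conjugation-invariant, so it cannot separate $a_1$ from $a_2$ (you do give a separate ad hoc argument for that pair, which is fine), and your picture of ``one perfect form per arc'' has the roles reversed --- perfect forms are the vertices (walls) of the Ryshkov boundary, the arcs being the loci with a unique minimal vector pair --- though on a cycle the two counts coincide and this does not affect the intended conclusion.
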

The first point in this theorem was proven in \cite{unitred}, see Theorem 1.3 (this follows from the fact that such fields are of Richaud-Degert type). The case where $r= \pm 1$ was also tackled in the same paper. Therefore, we will only prove the second point of this theorem in this paper.
\section{Proof of Theorem \ref{maintheorem}}
First, we will prove the following useful lemma.
\begin{lemma}\label{lemma}
    Let $K=\mathbb{Q}(\sqrt{d})$ with $d>0, d \equiv 2$ or $3$ mod $4$. Suppose that $d=n^2+r$ for some integers $n,r$, $-n < r \neq \pm 1 \leq n$. Then the unary forms $a_1x^2, a_2x^2$ are perfect, with
    \begin{align*}
        &a_1=\frac{1}{2}+\frac{2n^2+r-1}{4n(n^2+r)}\sqrt{d},
        \\&a_2=\frac{1}{2}-\frac{2n^2+r-1}{4n(n^2+r)}\sqrt{d}
    \end{align*}
\end{lemma}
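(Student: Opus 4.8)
The plan is to reduce everything to the rational binary quadratic form attached to $a_1x^2$ by the trace, and then to compute its minimum and minimal vectors explicitly. Since $d\equiv 2,3\bmod 4$ we have $\mathcal{O}_K=\mathbb{Z}[\sqrt{d}]$, so for $x=u+v\sqrt{d}$ with $u,v\in\mathbb{Z}$ and for $a_1=p+q\sqrt{d}$ with $p=\tfrac12$ and $q=\tfrac{2n^2+r-1}{4nd}$ (using $d=n^2+r$), a direct expansion gives
\begin{align*}
\trace_{K/\mathbb{Q}}(a_1x^2)=2p(u^2+dv^2)+4dq\,uv=u^2+\frac{2n^2+r-1}{n}\,uv+d\,v^2,
\end{align*}
a form we denote $Q_1(u,v)$; note $n\,Q_1$ has integer coefficients on $\mathbb{Z}^2$. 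I would first record that $a_1$ is totally positive, which amounts to $p>0$ and $p^2>q^2d$; the latter unwinds to $(r-1)^2<4n^2$, which holds because the hypotheses on $r$ force $r-1\in[-n,n-1]$, hence $|r-1|\le n<2n$.

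The main point is to pin down $\mu(a_1)$ and exhibit enough minimal vectors. Here I would apply the unimodular substitution $u\mapsto u-nv$, which (using $d=n^2+r$) transforms $n\,Q_1$ into the integral form $n\,u^2+(r-1)uv+n\,v^2$, where again $|r-1|\le n$. For $(u,v)\in\mathbb{Z}^2\setminus\{0\}$ this value is at least $n$: it is immediate when $uv=0$, and otherwise $n(u^2+v^2)+(r-1)uv\ge n(u^2+v^2)-n|uv|\ge n|uv|\ge n$, using $u^2+v^2\ge 2|uv|$. Hence $\mu(a_1)=1$, attained at least at the images of $(\pm1,0)$ and $(0,\pm1)$, which in the original coordinates are $x=\pm1$ and $x=\pm(\sqrt{d}-n)$; thus $\{\pm1,\ \pm(\sqrt{d}-n)\}\subseteq\mathcal{M}(a_1)$. (When $r=1-n$, equality $|r-1|=n$ produces additional minimal vectors, which only helps.)

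For perfection, suppose $b=p'+q'\sqrt{d}\in K_{>>0}$ has $\mu(b)=\mu(a_1)=1$ and $\mathcal{M}(b)=\mathcal{M}(a_1)$. Then $\trace_{K/\mathbb{Q}}(b\cdot 1^2)=2p'=1$, so $p'=\tfrac12$; and $\trace_{K/\mathbb{Q}}(b(\sqrt{d}-n)^2)=1$, where expanding $(\sqrt{d}-n)^2=(n^2+d)-2n\sqrt{d}$ yields $2(n^2+d)p'-4nd\,q'=1$, forcing $q'=\tfrac{n^2+d-1}{4nd}=\tfrac{2n^2+r-1}{4nd}$. Hence $b=a_1$. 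Equivalently, the two linear functionals $b\mapsto\trace_{K/\mathbb{Q}}(b)$ and $b\mapsto\trace_{K/\mathbb{Q}}(b(\sqrt{d}-n)^2)$ on $K\cong\mathbb{Q}^2$ have a coefficient matrix of determinant $-8nd\neq0$, hence are independent, so $a_1$ is the unique totally positive element with the given $\mu$ and $\mathcal{M}$; that is, $a_1x^2$ is perfect.

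Finally, $a_2=\overline{a_1}$ is the image of $a_1$ under the nontrivial automorphism $\tau$ of $K/\mathbb{Q}$. Since $\tau$ preserves $\mathcal{O}_K$ and fixes $\trace_{K/\mathbb{Q}}$, it preserves $\mu$, carries $\mathcal{M}(a_1)$ bijectively onto $\mathcal{M}(a_2)$, and preserves the property of being perfect; hence $a_2x^2$ is perfect as well (alternatively, one reruns the computation with $\sqrt{d}\mapsto-\sqrt{d}$, obtaining the minimal vectors $\pm1$ and $\pm(\sqrt{d}+n)$). I expect the only genuinely delicate step to be the middle one: establishing that $\mu(a_1)$ equals $1$ exactly, not merely bounding it, and being certain which lattice vectors attain it — precisely what the unimodular reduction together with the estimate $u^2+v^2\ge 2|uv|$ is designed to handle; the remaining steps are short algebraic verifications.
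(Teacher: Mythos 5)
Your proof is correct and follows essentially the same strategy as the paper: pass to the rational binary quadratic form given by the trace and exhibit $\pm 1$ and $\pm(n-\sqrt{d})$ as minimal vectors of minimum $1$. The only differences are in the details: you establish $\mu(a_1)=1$ via the unimodular reduction to $nu^2+(r-1)uv+nv^2$ and the inequality $u^2+v^2\ge 2|uv|$, where the paper instead completes the square and does casework on $x_1$, and you make explicit the final perfection step (the nonsingularity of the $2\times 2$ system coming from the two minimal vectors), which the paper leaves implicit in the phrase ``has at least two minimal vectors, making it perfect.''
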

\begin{proof}
    It suffices to show that $a_1x^2$ has at least two minimal vectors (since $a_2$ is a conjugate of $a_1$, if $a_1x^2$ is perfect it immediately follows that $a_2x^2$ is also perfect). Note that $\trace_{K/\mathbb{Q}}(a_1)=\trace_{K/\mathbb{Q}}(a_1(n-\sqrt{d})^2)=1$. We now want to show that $\mu(a)=1$.

    Let $\gamma=\frac{2n^2+r-1}{2n(n^2+r)}$. Then
    \begin{align}\label{1}
        \trace_{K/\mathbb{Q}}(a_1(x_0+x_1\sqrt{d})^2)=(x_0+x_1d\gamma)^2+(d-d^2\gamma^2)x_1^2,
    \end{align}
    for any $x_0,x_1 \in \mathbb{Q}$. We have
    \begin{align*}
        d-d^2\gamma^2=n^2+r-\frac{(2n^2+r-1)^2}{4n^2}=1-\frac{(r-1)^2}{4n^2}>1/4,
    \end{align*}
    for all $n>0$, $-n<r \leq n$, so if $|x_1| \geq 2$, $x_0+x_1\sqrt{d} \not\in \mathcal{M}(a_1)$. Obviously if $x_0+x_1\sqrt{d} \in \mathcal{M}(a_1)$ and $x_1=0$, then $x_0=1$. Moreover, we must have $x_1 \leq 0$ since $\gamma>0$, so we assume that $x_1=-1$. 

    The minimum of $\trace_{K/\mathbb{Q}}(a_1(x_0-\sqrt{d})^2)$ is achieved for $|x_0-d\gamma| \leq 1/2$, by \eqref{1}. Now
    \begin{align}
        |x_0-d\gamma|=\left|\frac{2nx_0-(2n^2+r-1)}{2n}\right| \leq 1/2, \label{2}
    \end{align}
    when $x_0=n$, so $\{ \pm 1, \pm (n-\sqrt{d})\} \subseteq \mathcal{M}(a_1)$, so $a_1x^2$ has at least two minimal vectors, making it perfect.
\end{proof}
We will now prove Theorem \ref{maintheorem}. Assume that $K=\mathbb{Q}(\sqrt{d})$, with $d \equiv 2$ or $3 \mod 4$, $d>0$ and $d=n^2+r$, $-n<r \neq 1 \leq n$. Let $\alpha+\beta\sqrt{d}$ denote the fundamental unit of $K$, and $\beta=m(m+2)$ for some odd integer $m$, and $\alpha \equiv \pm \left(\frac{m-1}{2}(m+2)^2+1\right) \mod \beta^2$. By Lemma \ref{lemma}, the forms $a_1x^2, a_2x^2$ are perfect unary forms, where
\begin{align*}
    &a_1=\frac{1}{2}+\frac{2n^2+r-1}{4n(n^2+r)}\sqrt{d},
        \\&a_2=\frac{1}{2}-\frac{2n^2+r-1}{4n(n^2+r)}\sqrt{d}.
\end{align*}
We have $\mathcal{M}(a_1)=\{\pm 1, \pm (n-\sqrt{d})\}$ and $\mathcal{M}(a_2)=\{ \pm 1, \pm (n+\sqrt{d})\}$ if $r>-(n-1)$, or $\mathcal{M}(a_1)=\{\pm 1, \pm (n-\sqrt{d}), \pm (n-1-\sqrt{d})\}$, $\mathcal{M}(a_2)=\{ \pm 1, \pm (n-\sqrt{d}), \pm (n-1+\sqrt{d})\}$ if $r=-(n-1)$.

We say that two perfect forms $\alpha_1x^2$, $\alpha_2x^2$ are neighbours if $\mathcal{M}(a_1) \cap \mathcal{M}(a_2) \neq \phi$. By the theory of perfect forms, the graph of neighbouring forms is connected. The perfect forms $a_1x^2$, $a_2x^2$ are neighbours, so we want to look for a neighbouring form to $a_1x^2$. 

We now consider the form $a_3x^2$, where $a_3=\alpha+\beta\sqrt{d}$. We note that this form is positive, which can be seen as follows: if $\alpha+\beta\sqrt{d}$ were not totally positive, then
\begin{align*}
    \alpha^2-d\beta^2=-1 \iff \alpha^2 \equiv -1 \mod \beta^2.
\end{align*}
However, since $\beta =m(m+2)$ for some odd integer $m$, we have $\beta \equiv 3 \mod 4$, so at least one prime dividing $m$ must be equivalent to $3$ mod $4$. The equation $x^2 \equiv -1 \mod N$ has no solutions for $x \in \mathbb{Z}/N\mathbb{Z}$ if any prime $p$ equivalent to $3$ mod $4$ divides $N$, so we must instead have $\alpha^2-d\beta^2=1$.

We consider the positive rational form
\begin{align*}
    \frac{1}{2}\trace_{K/\mathbb{Q}}(a_3(x_0+x_1\sqrt{d})^2)=q(x_0,x_1) \triangleq \alpha x_0^2+ d \alpha x_1^2+2d\beta x_0x_1.
\end{align*}
We perform Gauss reduction on $q(x_0,x_1)$ in order to ascertain the minimal vectors of $q$. Consider first $\alpha=k_1m^2(m+2)^2+\frac{m-1}{2}(m+2)^2+1$, for some integer $k_1 \geq 0$. Setting $l=k_1m(m+2)+\frac{m+1}{2}$, we note that
\begin{align*}
    \left|d\frac{\beta}{\alpha}-l\right|=\left|-\frac{1}{m}+\frac{1-\alpha^{-1}}{m(m+2)}\right|<\frac{1}{m}<1/2,
\end{align*}
so $l=\lfloor d\frac{\beta}{\alpha}\rceil$. Then
\begin{align*}
    q^\prime(x_0,x_1)\triangleq q(x_0-lx_1,x_1)=\alpha x_0^2+\left( d \alpha +  l^2\alpha -2ld \beta \right)x_1^2+2\left(d\beta-l\alpha \right)x_0x_1.
\end{align*}
We now consider the quantity
\begin{align*}
    \gamma \triangleq \frac{d\beta-l\alpha}{d\alpha+l^2\alpha -2ld\beta}.
\end{align*}
First, note that $d=l^2-2k_1(m+1)-1$, and so
\begin{align*}
    &d\alpha+l^2\alpha-2ld\beta\\
    &=(2l^2-2k_1(m+1)-1)\left(k_1m^2(m+2)^2+\frac{m-1}{2}(m+2)^2+1\right)\\&-2l\left(l^2-2k_1(m+1)-1\right)m(m+2)
    \\&=k_1((m+1)^2+1)+\frac{m+1}{2},
\end{align*}
which is clearly less than $\alpha$, so $\pm 1 \not\in \mathcal{M}(a_3)$. Set $l^\prime=m+1$. Then
\begin{align}
    \gamma+l^\prime=\frac{2k_1(m+1)+1}{k_1((m+1)^2+1)+\frac{m+1}{2}} \leq \frac{1}{2}, \label{3}
\end{align}
for all $m \geq 3$, $k_1 \geq 0$, so $l^\prime=-\lfloor \gamma \rceil$. So we consider
\begin{align*}
    q^{\prime \prime}(x_0,x_1) &\triangleq q^\prime(x_0,x_1-l^\prime x_0)=\left(\alpha+{l^\prime}^2\epsilon +2l^\prime\left(d\beta-l\alpha\right)\right)x_0^2+\epsilon x_1^2+2\epsilon(\gamma+l^\prime)x_0x_1,
\end{align*}
where
\begin{align*}
    \epsilon \triangleq d\alpha+l^2\alpha-2ld\beta.
\end{align*}
A simple computation yields that
\begin{align*}
    \epsilon=\alpha+{l^\prime}^2\epsilon +2l^\prime\left(d\beta-l\alpha\right),
\end{align*}
and so we must have that $2\epsilon=\mu(a_3)$, since the Gauss reduction algorithm must conclude here. Therefore, $k_1m(m+2)+\frac{m+1}{2}-\sqrt{d}, (\alpha-\beta)\left(k_1m(m+2)+\frac{m+1}{2}+\sqrt{d}\right) \in \mathcal{M}(a_3)$. Moreover, it is easily seen that $k_1m(m+2)+\frac{m+1}{2}-\sqrt{d} \neq \pm (\alpha-\beta)\left(k_1m(m+2)+\frac{m+1}{2}+\sqrt{d}\right)$, and so $a_3x^2$ is a perfect unary form.

Suppose now $\alpha=k_2m^2(m+2)^2-\frac{m-1}{2}(m+2)^2-1$, for some integer $k_2 \geq 1$. Again, we consider the positive rational form
\begin{align*}
    \frac{1}{2}\trace_{K/\mathbb{Q}}(a_3(x_0+x_1\sqrt{d})^2)=q(x_0,x_1) \triangleq \alpha x_0^2+d\alpha x_1^2+2d\beta x_0x_1.
\end{align*}
As before, we perform Gauss reduction on $q(x_0,x_1)$. Setting $l=k_2m(m+2)-\frac{m+1}{2}$, we note that
\begin{align*}
    \left|d\frac{\beta}{\alpha}-l\right|=\left|\frac{m+1-\alpha^{-1}}{m(m+2)}\right|<\frac{1}{m}<\frac{1}{2},
\end{align*}
so $l=\lfloor d\frac{\beta}{\alpha} \rceil$. Then we set
\begin{align*}
    q^\prime(x_0,x_1)\triangleq q(x_0-lx_1,x_1)=\alpha x_0^2+\left( d \alpha +  l^2\alpha -2ld \beta \right)x_1^2+2\left(d\beta-l\alpha \right)x_0x_1.
\end{align*}
Consider
\begin{align*}
    \gamma \triangleq \gamma \triangleq \frac{d\beta-l\alpha}{d\alpha+l^2\alpha -2ld\beta}.
\end{align*}
First, note that $d=l^2+2k_2(m+1)-1$, and so
\begin{align*}
    &d\alpha+l^2\alpha-2ld\beta
    \\&=(2l^2+2k_2(m+1)-1)\left(k_2m^2(m+2)^2-\frac{m-1}{2}(m+2)^2-1\right)\\&-2l\left(l^2+2k_2(m+1)-1\right)m(m+2)
    \\&=k_2\left((m+1)^2+1\right)-\frac{m+1}{2},
\end{align*}
which is less than $\alpha$ for all $k_2 \geq 1$, and so $\pm 1 \not\in \mathcal{M}(a_3)$. Set $l^\prime=m+1$. Then
\begin{align*}
    \gamma-l^\prime=-\frac{2k_2(m+1)+1}{k_2\left((m+1)^2+1\right)-\frac{m+1}{2}}>-1/2
\end{align*}
for all $k_2 \geq 1$, $m \geq 3$, so $l^\prime=\lfloor \gamma \rceil$. It follows again that the minimum of $q(x_0,x_1)$ is $\epsilon$, where $\epsilon=d\alpha+l^2\alpha -2ld$, and so $\mu(a_3)=2\epsilon$ and $l-\sqrt{d}, (\alpha-\beta \sqrt{d})(l+\sqrt{d}) \in \mathcal{M}(a_3)$, as before.

We now want to prove that $a_1x^2, a_2x^2, a_3x^2$ are the only perfect unary forms, up to homothety and equivalence in $K$. We say that two perfect unary forms $b_1x^2, b_2x^2$ are neighbouring forms if $b_1 \neq b_2$, $\mu(b_1)=\mu(b_2)$ and $\mathcal{M}(b_1) \cap \mathcal{M}(b_2) \neq \phi$. It is known that the graph of neighbouring forms is connected \cite{mar03}.

First, we want to show the following:

\begin{align*}
    &\mathcal{M}(a_1)=\{ \pm 1, \pm (n-\sqrt{d})\},
    \\&\mathcal{M}(a_2)=\{ \pm 1, \pm (n+\sqrt{d})\},
    \\&\mathcal{M}(a_3)=\{ \pm (n-\sqrt{d}), \pm (\alpha-\beta \sqrt{d})(n+\sqrt{d})\}.
\end{align*}
Suppose that $\mathcal{M}(a_1)$ contains a minimal vector that is not $\pm 1, \pm(n-\sqrt{d})$. This can only happen when equality holds in \eqref{2}, and equality here holds only if $r=-(n-1)$. Recall that if $\alpha=km^2(m+2)^2 + \delta\left(\frac{m-1}{2}(m+2)^2+1\right)$ where $\delta \in \{\pm 1\}$, then

\begin{align*}
    d=\left(km(m+2)+\delta \frac{m+1}{2}\right)^2-2\delta k(m+1)-1.
\end{align*}
Setting $r=-2\delta k(m+1)-1$, $n=km(m+2)+\delta\frac{m+1}{2}$, it is easily seen that $-(n-1)<r<n$, and so $\mathcal{M}(a_1)=\{\pm 1, \pm (n-\sqrt{d})\}$. It can be similarly deduced that $\mathcal{M}(a_2)=\{ \pm 1, \pm (n+\sqrt{d})\}$, since $a_2$ is conjugate to $a_1$.

If there was an element in $\mathcal{M}(a_3)$ that is not equal to $\pm (n-\sqrt{d}), \pm( \alpha-\beta \sqrt{d})(n+\sqrt{d})$, then \eqref{3} would have to hold with equality. This can happen only if $k_1=0, m=3$ - however, this would mean that $d=3=4^2-1$, which contradicts the hypothesis that $d=n^2+r$ with $r \neq \pm 1$. So, $\mathcal{M}(a_3)=\{ \pm (n-\sqrt{d}), \pm (\alpha-\beta \sqrt{d})(n+\sqrt{d})\}$.

A simple computation shows that $a_1x^2, a_2x^2, a_3x^2$ do not lie in the same homothety or equivalence class, and so they must be distinct perfect unary forms up to homothety and equivalence, so $n_K \geq 3$. Recall that if $bx^2$ is a perfect unary form, then $b^\prime x^2$ is also a perfect unary form where $b^\prime$ is a conjugate of $b$. Clearly $a_1, a_2$ are conjugates. If we denote by $a_4$ the conjugate of $a_3$, it is easily seen that $a_3=a_4(\alpha-\beta \sqrt{d})^2$ and so $a_3x^2, a_4x^2$ are equivalent perfect unary forms.

Since $\mathcal{M}(a_1) \cap \mathcal{M}(a_2)= \{ \pm 1\}$, $a_1x^2$, $a_2x^2$ are neighbouring perfect forms. Similarly, $a_1x^2, a_3 x^2$ are neighbouring perfect forms. Suppose then $a_3x^2$ has another neighbouring form that is not equivalent to any of the previously listed perfect forms, call this $a^\prime x^2$. Then $ \pm (\alpha-\sqrt{d})(n+\sqrt{d}) \subseteq \mathcal{M}(a^\prime)$. This means that $ \pm (n+\sqrt{d}) \in \mathcal{M}(a^\prime (\alpha-\beta \sqrt{d})^2)$, and $a^\prime (\alpha-\beta \sqrt{d})^2 x^2$ is also a perfect unary form - however, $\pm (n+\sqrt{d}) \in \mathcal{M}(a_2)$ and $\pm (n+\sqrt{d}) \in \mathcal{M}(a_4)$ where $a_4=\alpha-\beta \sqrt{d}$, and both $a_2x^2, a_4x^2$ are perfect unary forms, so $a^\prime x^2= \lambda a_2x^2$ or $a^\prime x^2=\lambda a_4x^2$ for some $\lambda \in \mathbb{Q}^+$, i.e. $a^\prime x^2$ would lie in the same homothety and equivalence class as either $a_2x^2$ or $a_3x^2$. We therefore conclude that $n_K=3$, with all perfect unary forms being in the same homothety and/or equivalence class as either $a_1x^2, a_2x^2, a_3x^2$.

\end{document}